\tikzset{my loop/.style =  {to path={
  \pgfextra{}
  [looseness=12,min distance=6mm]
  \tikz@to@curve@path},font=\sffamily\small
  }}  
\newtheorem{theorem}{Theorem}
\newtheorem*{theorem*}{Theorem}
\newtheorem{lemma}[theorem]{Lemma}
\newtheorem{proposition}[theorem]{Proposition}
\theoremstyle{definition}
\newtheorem{ex}[theorem]{Example}
\newtheorem{df}[theorem]{Definition}
\theoremstyle{remark}
\newtheorem{rem}{Remark}
\newcommand*{\R}{\mathbb{R}}
\newcommand*{\Z}{\mathbb{Z}}
\newcommand*{\N}{\mathbb{N}}
\newcommand*{\C}{\mathbb{C}}
\newcommand*{\F}{\mathcal{F}}
\newcommand*{\G}{\mathcal{G}}
\newcommand*{\rk}{\mathrm{rk}}
\newcommand*{\orth}{\text{O}}
\newcommand*{\spl}{\text{Sp}}
\newcommand{\la}{\langle}
\newcommand{\ra}{\rangle}
\title{Mixed partition functions and exponentially bounded edge-connection rank}
\author{Guus Regts\thanks{University of Amsterdam, Sciencepark 105-107, 1098 XH Amsterdam, the Netherlands. Email: \texttt{guusregts@gmail.com.} Supported by a NWO Veni grant.}\and Bart Sevenster \thanks{University of Amsterdam. Email: \texttt{blsevenster@gmail.com.} Supported by ERC grant number 339109. The present paper is partly based on this author's PhD thesis~\cite{thesis Bart}.}}
\begin{document}
\maketitle

\begin{abstract}
\noindent 
We study graph parameters whose associated edge-connection matrices have exponentially bounded rank growth.
Our main result\footnote{Some of the results of this paper were announced in an extended abstract~\cite{RS17a}. Unfortunately \cite{RS17a} contains a mistake; we will comment on that below.}. is an explicit construction of a large class of graph parameters with this property that we call \emph{mixed partition functions}. 
Mixed partition functions can be seen as a generalization of partition functions of vertex models, as introduced by de la Harpe and Jones, [P. de la Harpe, V.F.R. Jones, Graph invariants related to statistical mechanical models:
examples and problems, {\sl Journal of Combinatorial Theory}, Series B {\bf 57} (1993) 207--227] and they are related to invariant theory of orthosymplectic supergroup.
We moreover show that evaluations of the characteristic polynomial of a simple graph are examples of mixed partition functions, answering a question of de la Harpe and Jones.
\\\quad 

\noindent \textbf{Keywords.} partition function, graph parameter, orthogonal group, symplectic group, orthosymplectic Lie super algebra, circuit partition polynomial, connection matrix. 

\noindent \textbf{M.S.C [2010]}  Primary   05C45, 15A72; Secondary 05C25, 05C31
\end{abstract}

\section{Introduction}
De la Harpe and Jones \cite{HJ} introduced vertex models on graphs as a generalization of the Ising-Potts model of statistical physics. Partition functions of vertex models form a rich class of graph parameters including, for instance, the number of matchings, the permanent of the adjacency matrix of a simple graph and the number of graph homomorphisms into a fixed graph \cite{Sz7}. Closely related concepts appear in quantum information theory as \emph{tensor network contractions}~\cite{MS08}, in theoretical computer science as \emph{Holant problems}~\cite{CLX11} and in knot theory as \emph{Lie algebra weight systems}~\cite{CDM12}.
In the combinatorics literature a vertex model is often called an edge-coloring model, cf. \cite{Sz7,L12} and we adopt this terminology here.



\paragraph{Partition functions of edge coloring models}
Before we give the definition of partition functions of edge-coloring models, we first introduce some useful terminology.
All graphs considered are finite and may have loops or multiple edges. The set of all graphs is denoted by $\G$. 
We will consider a single edge with an empty vertex set as a connected graph and we call this graph the \emph{circle} and denote it by $\bigcirc$.
For the purposes of this paper we let $\N=\mathbb{Z}_{\geq 0}$.

We next turn to the definition of partition function of edge-coloring models. We do this in a slightly unconventional way. This will however turn out to be convenient later.
Let $k \in \N$ and let $V_k$ be a vector space over $\C$ with basis $\{e_1,\ldots,e_k\}$. Let $S V_k$ be the symmetric algebra of $V_k$. Recall that the symmetric algebra is the quotient of the tensor algebra $T V_k$ by the ideal generated by $\{ x \otimes y - y \otimes x \mid x,y \in V_k \}$. The image of $e_{i_1} \otimes \cdots \otimes e_{i_n}$ in $S V_k$ under this quotient map is denoted by $e_{i_1} \odot \cdots \odot e_{i_n}$. 
A basis of $S V_k$ is given by the elements $\{\bigodot_{i \in S} e_i\}$, where $S$ is a multiset consisting of elements in $[k]:=\{1,\ldots,k\}$. An element $h \in  (S V_k)^*$ is called a \emph{$k$-color edge coloring model}\index{edge coloring model!ordinary}. The \emph{partition function}\index{partition function!ordinary} $p_h$ of the edge coloring model $h$ is the graph parameter over $\C$ defined, for a graph $G = (V,E)$, by
\begin{equation}\label{eq:pf}
p_h(G) := \sum_{\phi : E \rightarrow [k]} \prod_{v \in V} h(\bigodot_{a \in \delta(v)} e_{\phi(a)}),
\end{equation}
where $\delta(v)$ is the set of edges incident with $v$. Note that $p_h(\bigcirc) = k$. If a graph parameter is the partition function of a $k$-color edge coloring model for some $k \in \N$, then we will refer to it as an \emph{ordinary partition function}\index{partition function!ordinary}.  A graph parameter $f$ is called \emph{multiplicative} if for two graphs $G$ and $H$ we have that $f(G \cup H) = f(G) f(H)$. We note that the partition function of an edge-coloring model $h$ is multiplicative.


\begin{ex}
Let us show that the number of matchings in a graph is an ordinary partition function. 
Take $k=2$. Let $h \in (S V_2)^*$ be defined by $h(e_1^{\odot n_1} \odot e_2^{\odot n_2})=1$ if $n_2\leq 1$ and zero otherwise.
Then the set of edges mapped to $2$ by a map $\phi:E\to [2]$ form a matching if and only if each vertex sees color $2$ at most once. Hence these edges form a matching if and only if $\phi$ contributes $1$ to the sum \eqref{eq:pf}.
\end{ex}

We have just defined (ordinary) partition functions for graphs, but they can also be defined for directed graphs \cite{HJ}, virtual link diagrams \cite{RSS17}, chord diagrams \cite{S15a}, and several other combinatorial structures \cite{S15b}.
Instead of coloring the edges, one can also color the vertices. 
This then gives rise to the partition function of a \emph{spin model} cf. \cite{HJ}, or \emph{vertex coloring model} cf.~\cite{FLS,L12}. See Subsection~\ref{sec:spin} for details.

\paragraph{Edge connection matrices}
Several types of connection matrices have played an important role in characterizing partition functions~\cite{FLS,Sz7,S15,S15a,S15b,RSS16,RSS17}. 
Connection matrices also play an important role in definability of graph parameters~\cite{KM12}. 
We refer to the book of Lov\'asz~\cite{L12} and to \cite{R13} for further background. 

The connection matrices we consider in this paper originate in \cite{Sz7}. 
To define them we need the concept of a fragment.
For $t\in \N$, a \emph{$t$-fragment} is a graph which has $t$ vertices of degree one labeled $1,2,\ldots,t$. 
We will refer to an edge incident with a labeled vertex as an \emph{open end}. 
Each open end is labeled with the label of the labeled vertex incident with it.
Let $\F_t$ denote the collection of all $t$-fragments. 
Then $\F_0$ can be considered as the collection of all graphs $\G$.
For $i=1,2$, let $F_i$ be a $t_i$-fragment and let $o_i$ be an open end of $F_i$. 
The procedure of removing the labeled vertices incident to the $o_i$ and gluing the edges $o_1$ and $o_2$ together is referred to as the \emph{gluing} of $o_1$ and $o_2$. This way a $t_1+t_2-2$-fragment is obtained.
Define a \emph{gluing operation} $*:\F_t\times \F_t\to \G$, where, for two $t$-fragments $F_1,F_2$, $F_1*F_2$ is the graph obtained from $F_1\cup F_2$ by gluing the open ends with the same labels together.
Note that by gluing two edges of which both the endpoints are labeled one creates a circle.
For a graph parameter $f:\G\to \C$, the $t$-th \emph{edge-connection matrix} $M_{f,t}$ is an infinite matrix whose rows and columns are indexed by $t$-fragments and which is defined by, for $F_1,F_2\in \F_t$,
\[
M_{f,t}(F_1,F_2):=f(F_1*F_2).
\]

\paragraph{Edge-rank connectivity}
We say that a graph parameter $f:\G\to \C$ has \emph{exponentially bounded edge-connection rank} if there exists a constant $r>0$ such that $\rk (M_{f,t})\leq r^t$ for all $t\in \N$.
Following~\cite{FLS}, the minimal such $r$ we call the \emph{edge-rank connectivity} of $f$. 
By Fekete's Lemma this minimal $r$ is equal to $\text{sup}_{t \in \Z_+} (\text{rk}(M_{f,t}))^{1/t}$. 
This definition is additionally motivated from an algorithmic perspective. Using a variation of an idea of Lov\'asz and Welsh (see \cite[Theorem 6.48]{L12}) one can show that a graph parameter $f$ with edge-rank connectivity $r$ can be computed in time $r^{O(\Delta(G)\text{tw}(G))}(|V(G)|)^{O(1)}$ for any graph $G$. 
(Here $\Delta(G)$ denotes the maximum degree, $\text{tw}(G)$, the treewidth, see e.g.~\cite{Dies}, and $V(G)$ the vertex set of $G$.).

It is not difficult to show that if $f$ is the partition function of a $k$-color edge coloring model, then the edge-rank connectivity of $f$ is at most $k$. In fact, Schrijver \cite{S15} characterized partition functions of edge-coloring models over $\C$ using the edge-rank connectivity:
\begin{theorem}[\cite{S15}]\label{eq:thm lex}
A graph parameter $f:\G\to \C$ is the partition function of an edge-coloring model over $\C$ if and only if
\begin{equation}
f(\emptyset)=1, f(\bigcirc)\in \R \text{ and }\text{rk}(M_{f,t})\leq f(\bigcirc)^t\text{ for all }t\in \N.
\end{equation}
\end{theorem}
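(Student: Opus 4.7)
The forward direction provides the framework for the reverse. Given an edge-coloring model $h \in (SV_k)^*$ with $k = f(\bigcirc)$, I would attach to every $t$-fragment $F$ a tensor
\[
\tau(F) := \sum_{\phi : E(F) \to [k]} \Bigl(\prod_{v \text{ internal}} h\bigl(\bigodot_{a \in \delta(v)} e_{\phi(a)}\bigr)\Bigr) e_{\phi(a_1)} \otimes \cdots \otimes e_{\phi(a_t)} \in V_k^{\otimes t},
\]
where $a_i$ denotes the open end labeled $i$. Gluing of open ends corresponds precisely to tensor contraction with respect to the standard bilinear form on $V_k$, so $f(F_1 * F_2) = \langle \tau(F_1), \tau(F_2)\rangle$; hence $M_{f,t}$ factors through $\tau$, yielding $\rk(M_{f,t}) \leq \dim V_k^{\otimes t} = k^t$, and the remaining conditions $f(\emptyset) = 1$ and $f(\bigcirc) = k \in \N$ are immediate.

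For the nontrivial direction, set $k := f(\bigcirc)$ and introduce the quotient spaces $\A_t := \C\F_t / \ker M_{f,t}$. Each $\A_t$ carries a nondegenerate symmetric bilinear form induced by $M_{f,t}$, satisfies $\dim \A_t = \rk(M_{f,t}) \leq k^t$, and inherits from $\F_t$ the natural operations of disjoint union $\A_s \otimes \A_t \to \A_{s+t}$, label permutation by $S_t$, and partial gluing $\A_{t+2} \to \A_t$. A preliminary step is to show that $k \in \Zp$, which follows from a careful interplay between the rank growth bound, the multiplicativity of dimensions under disjoint union, and the integrality of the ranks of the connection matrices.

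The heart of the argument is to realize the tower $\bigoplus_t \A_t$ as the tower of orthogonal invariants $\bigoplus_t (V^{\otimes t})^{\orth(k)}$, where $V := \C^k$ carries its standard symmetric form. Concretely, I would construct for each $t$ an equivariant isometric embedding $\tau_t : \A_t \hookrightarrow V^{\otimes t}$ landing in $(V^{\otimes t})^{\orth(k)}$ and compatible with disjoint union, contraction, and label permutation. The key input is the First Fundamental Theorem of invariant theory for $\orth(k)$, asserting that $(V^{\otimes t})^{\orth(k)}$ is spanned by pair-partition tensors of $[t]$; these are precisely the $\tau_t$-images of the $t$-fragments whose only edges join labeled vertices directly, which form a natural spanning set of $\A_t$. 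Once $\tau_\bullet$ is in hand, the edge-coloring model $h$ is extracted from $\tau_n$ applied to the $n$-star: its image is a symmetric tensor in $\sym^n V$, and dualizing against the basis monomials of $SV$ produces $h \in (SV)^*$, with compatibility of $\tau_\bullet$ with gluing then forcing $p_h = f$ on all graphs.

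The principal obstacle is the construction of $\tau_\bullet$: one must manufacture an $\orth(k)$-representation on a $k$-dimensional space together with an equivariant isometric tower of embeddings of $\A_\bullet$ into its tensor powers, starting from an abstract algebra known only through its combinatorial structure and the dimension bound $k^t$. Following Schrijver, I would tackle this via a Procesi-type reconstruction on an affine $\orth(k)$-variety parametrizing candidate embeddings $\A_\bullet \to V^{\otimes \bullet}$: the rank bound supplies enough nonzero invariants for the Nullstellensatz to force a nonempty closed orbit, and the First Fundamental Theorem then identifies the image with the span of the graph tensors, from which $h$ can be read off.
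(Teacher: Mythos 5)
The paper does not prove this statement at all: it is quoted verbatim from Schrijver \cite{S15}, so the only fair comparison is with Schrijver's proof. Your forward direction (fragment tensors in $V_k^{\otimes t}$, gluing as contraction, Gram-matrix rank bound) is correct and standard; the problems are in the converse, which is where all the content of \cite{S15} lies.

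The central step of your plan — an equivariant isometric embedding $\tau_t:\A_t\hookrightarrow (V^{\otimes t})^{\orth(k)}$ with the pair-partition fragments as a spanning set of $\A_t$ — cannot work, and this is a genuine gap rather than a presentational one. First, the target is too small: since $-\mathrm{id}\in\orth(k)$, the space $(V^{\otimes t})^{\orth(k)}$ is $\{0\}$ for every odd $t$, while $\A_1=\C\F_1/\ker M_{f,1}$ is typically nonzero (already for the number-of-matchings example in this paper, gluing the $1$-fragment consisting of a single pendant edge to itself gives $f(K_2)=2\neq 0$). Relatedly, the tensors your own forward direction assigns to fragments are not $\orth(k)$-invariant (the $1$-star goes to $\sum_i h(e_i)e_i$), so there is no reason the reconstructed embedding should land in invariants, and the $t$-fragments whose edges only join labeled vertices do not span $\A_t$ (for $t=1$ there are none). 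The FFT for $\orth(k)$ enters the true proof in a dual way: one maps graphs (and fragments) to $\orth_k$-invariant regular functions (resp.\ covariants) on the space of edge-coloring models $(S V_k)^*$, exactly as outlined in Section~\ref{sec:invariant} of this paper for the mixed case; the FFT identifies the image of this map, the second fundamental theorem describes its kernel, and the Nullstellensatz together with reductivity of $\orth_k$ produces a model $h$ once one knows $\ker p\subseteq\ker f$ (equivalently, the antisymmetrization identities of \cite{DGLRS12}). The genuinely new work in \cite{S15} is to deduce those kernel conditions, and the integrality of $f(\bigcirc)$, from the rank-growth hypothesis alone; your proposal waves at both. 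In particular, integrality does not follow from ``integrality of ranks plus multiplicativity'': Schrijver proves it via a lower bound on the rank of the submatrix of $M_{f,2n}$ indexed by matching-fragments, whose entries are powers $f(\bigcirc)^{c(M\cup N)}$ — the same Proposition~2 of \cite{S15} this paper invokes in the Remark on $J(\cdot,x)$ for $x\notin\Z$. Finally, note that over $\C$ there is no positivity available, so ``isometric embedding'' arguments in the style of Freedman--Lov\'asz--Schrijver do not transfer; the Nullstellensatz route through invariant functions on the space of models is what replaces them.
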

In~\cite{DGLRS12} an example of a graph parameter with finite edge rank connectivity was given that is not the partition function of an edge-coloring model. 
In \cite{RS17}, the authors of the present paper generalized this example by introducing skew-partition functions and showing that these have finite edge-connection rank connectivity.

\paragraph{Our contributions}
In the present paper we introduce a class of graph parameters that may be seen as a mix of ordinary partition functions and skew-partition functions, that we call \emph{mixed partition functions}.
Their definition is a little involved and will be given in Section~\ref{sec:def}.
Mixed partition functions are connected to invariant theory of the orthogonal group, the symplectic group and the orthosymplectic Lie super algebra.\footnote{In the extended abstract~\cite{RS17a} we incorrectly announced that the orthosymplectic Lie super algebra does not play a role.} 
While we say a few words about this connection in Section~\ref{sec:invariant}, we will focus on combinatorial aspects of mixed partition functions in the present paper and consider the connection to invariant theory in a subsequent paper~\cite{RS18}. 

One of our main results is that mixed partition functions have finite edge-rank connectivity and we in fact conjecture that mixed partition functions form the entire class of graph parameters (that take value $1$ on the empty graph) of finite edge-rank connectivity\footnote{In~\cite{RS17a} we in fact announced this to be true. We hope to be able to prove this conjecture in~\cite{RS18}. As there are quite a number of technical challenges related to supersymmetry and the fact that the orthosymplectic Lie super algebra is generally not semisimple, we choose to call it a conjecture for now.}.
We shall give various examples in Section~\ref{sec:examples} of mixed partition functions. In particular we shall show that a certain evaluation of the characteristic polynomial of simple graphs cannot be realized as the partition function of  an edge-coloring model, answering a question of de la Harpe and Jones~\cite{HJ}, while we show that any evaluation of the characteristic polynomial can be described as a mixed partition function.

\section{Definitions}\label{sec:def}
We now turn to the definition of the relevant edge-coloring models and their mixed partition functions. 
We first recall the construction of skew-partition functions from \cite{RS17}, which we modify slightly. 

\subsection{Skew partition functions}
An \emph{Eulerian graph}\index{Eulerian graph} $G = (V,E)$ is a graph such that each vertex has even degree (we do not assume $G$ to be connected). A \emph{local pairing}\index{local pairing} $\kappa$ of $G$ is at each vertex $v \in V$ a decomposition $\kappa_v$ of the edges incident with $v$ into ordered pairs, i.e., $\kappa_v = \{(a_1,a_2),\ldots,(a_{d(v)-1},a_{d(v)}) \}$. If $(a_i,a_{i+1}) \in \kappa_v$, then we say that $a_i$ and $a_{i+1}$ are \emph{paired}\index{paired} at $v$. 
Let $\omega$ be an Eulerian orientation of the edges of $G$ and $\kappa$ a local pairing of $G$. We say that $\kappa$ and $\omega$ are \emph{compatible} if for each vertex $v$ and for each $(a_1,a_2) \in \kappa_v$ the arc $a_1$ is incoming at $v$ and the arc $a_2$ is outgoing at $v$ under $\omega$.
 
Recall that a circuit is a closed walk where vertices may occur multiple times, but edges may not. Fix an Eulerian orientation $\omega$. A compatible local pairing $\kappa$ decomposes $E$ into circuits that, after choosing a starting point $v_0$ and a direction, are of the form $(v_0,e_1,\ldots,e_i,v_i,e_{i+1},\ldots,v_n)$, where $v_0 = v_n$ and such that $e_i$ and $e_{i+1}$ are paired at $v_i$ for each $i\in[n]$. We consider the indices modulo $n$. We will refer to a circuit in this decomposition as a \emph{$\kappa$-circuit}. We define $c(\kappa)$ to be the number of circuits in this decomposition. 

Let $V_{2 \ell}:=\C^{2\ell}$ and let $\{f_1,\ldots,f_{2\ell}\}$ be the standard basis of $V_{2 \ell}$. The \emph{exterior algebra} $\bigwedge V_{2\ell}$ of $V_{2\ell}$ is the quotient of the tensor algebra $T V_{2\ell}$ by the ideal generated by $\{x \otimes y + y \otimes x \mid x,y \in V_{2\ell} \}$. The image of $f_{i_1} \otimes \cdots \otimes f_{i_n}$ under this quotient map is denoted by $f_{i_1} \wedge \cdots \wedge f_{i_n}$. 

Define $g_i\in V_{2 \ell}$ for $i=1,\ldots,2\ell$ by 
\begin{equation}\label{eq:dual}
g_i:=\left\{\begin{array}{rl} -f_{i+\ell} &\text{ if } i\leq \ell,
\\ f_{i-\ell}& \text{ if }i>\ell.
\end{array}\right.
\end{equation}

Let $G = (V,E)$ be an Eulerian graph with Eulerian orientation $\omega$ and compatible local pairing $\kappa$.  
For $h=(h_v)_{v\in V}$ with $h^v \in (\bigwedge V_{2 \ell})^*$ for each $v\in V$, we define
 \begin{equation}\label{eq:function sh}
s_h(G,\omega,\kappa):=(-1)^{c(\kappa)}\sum_{\phi:E\to [2\ell]}\prod_{v\in V}h^v(\bigwedge_{(a_1,a_2) \in \kappa_v} f_{\phi(a_1)} \wedge g_{\phi(a_2)}).
\end{equation}
By skew-symmetry this is independent of the order in which we take the wedge over the elements of $\kappa_v$. We see that $s_h(G,\omega,\kappa) = 0$ if $G$ contains a vertex of degree larger than $2 \ell$, as $\bigwedge^n V_{2 \ell}  = \{0\}$ if $n > 2 \ell$.  

The following proposition follows immediately from \cite[Proposition 1]{RS17}.
\begin{proposition}\label{prop:inv keuze}
Let $G$ be an Eulerian graph with Eulerian orientation $\omega$ and compatible local pairing $\kappa$ and let $h=(h^v)_{v\in V}$ with $h^v \in (\bigwedge V_{2 \ell})^*$ for each $v\in V$. 
Then $s_{h}(G,\omega,\kappa)$ is independent of the choice of $\omega$ and $\kappa$ and we write $s_h(G):=s_h(G,\omega,\kappa)$. 
\end{proposition}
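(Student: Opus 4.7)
The plan is to prove independence in two stages: first vary the compatible pairing $\kappa$ with the Eulerian orientation $\omega$ held fixed, then vary $\omega$. This is exactly the strategy of~\cite[Proposition 1]{RS17}, and the present statement follows once one checks that the minor modifications in the current definition---specifically the use of the bases $\{f_i\}$ and $\{g_i\}$ related by~\eqref{eq:dual}---are compatible with the sign accounting there.

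For pairing invariance, I would reduce to the case where $\kappa'$ differs from $\kappa$ at only a single vertex $v$, since any two compatible pairings differ at finitely many vertices. A compatible pairing at $v$ is equivalent to a bijection between the incoming and outgoing arcs at $v$, so $\kappa'_v$ and $\kappa_v$ are related by a permutation $\pi$ of the outgoing arcs; decomposing $\pi$ into transpositions reduces us to a single swap. A standard fact about Eulerian circuit decompositions says that swapping two pairs at one vertex changes $c(\kappa)$ by exactly $\pm 1$, while the skew-symmetry of $\bigwedge V_{2\ell}$ produces a matching sign of $-1$ in the vertex factor at $v$; the two signs cancel.

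For orientation invariance, any two Eulerian orientations differ by reversing the arcs along an edge-disjoint family of directed closed trails, so it suffices to handle a single such trail $C$. Using pairing invariance, I may pick $\kappa$ so that $C$ is one of its circuits; the pairing $\kappa'$ compatible with the reversed orientation is then obtained by swapping the order within each pair of $\kappa_v$ along $C$. The contribution of $C$ to~\eqref{eq:function sh} is a contraction of the vertex tensors using the ``propagator'' $J:=\sum_i f_i\otimes g_i \in V_{2\ell}\otimes V_{2\ell}$ at each edge; reversing $C$ replaces $J$ by $J^T=\sum_i g_i\otimes f_i$. The identity $J^T=-J$---which expresses the antisymmetry of the symplectic form determined by~\eqref{eq:dual}---combines with the changes in $c(\kappa)$ and the local wedge signs to yield invariance.

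The main obstacle is precisely this sign bookkeeping in the second stage: the parity of $|E(C)|$, the change in $c(\kappa)$, the identity $J^T=-J$, and the skew-symmetry of $\bigwedge V_{2\ell}$ must all be weighed against each other and cancel exactly. The non-trivial feature is that the symplectic relation~\eqref{eq:dual} imposes a hidden constraint on the tensors $A_{ab}:=h^v(f_a\wedge g_b)$ coming from the vertex data, and it is this constraint that secretly enforces the dihedral (not merely cyclic) symmetry needed for closed-trail reversal. This tight cancellation is the substantive content of~\cite[Proposition 1]{RS17}, from which the present proposition follows immediately upon matching the propagators in the two conventions.
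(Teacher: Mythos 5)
Your proof ultimately takes the same route as the paper, whose entire argument is the single sentence that the proposition ``follows immediately from \cite[Proposition 1]{RS17}''; you likewise defer the substantive cancellation to that earlier result, while sketching the two-stage argument (pairing invariance with $\omega$ fixed, then orientation invariance) that underlies it. The pairing-invariance sketch is right: a swap at a vertex changes $c(\kappa)$ by $\pm 1$ while the exterior algebra supplies a compensating sign $-1$. In the orientation step your sign ledger is slightly off, though: reversing the arcs of a single $\kappa$-circuit $C$ and flipping its pairs leaves $c(\kappa)$ unchanged, so $(-1)^{c(\kappa)}$ contributes nothing there, and the cancellation does not in fact depend on the parity of $|E(C)|$. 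Writing $M^v_{ab}:=h^v(f_a\wedge g_b\wedge\cdots)$, the relation \eqref{eq:dual} forces $M^v_{ba}=s(a)s(b)\,M^v_{a^*b^*}$ for suitable signs $s(\cdot)\in\{\pm1\}$ and the conjugation $a\mapsto a^*$ on $[2\ell]$; the reversed circuit trace then equals the original because $\prod_i s(\phi(e_i))s(\phi(e_{i+1}))$ taken around the circuit is a perfect square, hence identically $1$, for any circuit length --- there is no residual $(-1)^{|E(C)|}$ to absorb. The ``hidden constraint'' on the vertex tensors that you correctly identify is exactly what carries the argument, and since you, like the paper, rest the burden of proof on \cite[Proposition 1]{RS17}, these are presentational slips rather than gaps.
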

The proposition allows us to make the following definition.
\begin{df}\label{df:skewpart}
Let $G=(V,E)$ be a graph.
For $h=(h^v)_{v\in V}$ with $h^v \in (\bigwedge V_{2 \ell})^*$ for each $v\in V$, the partition function $p_h$ of $h$ is defined as 
\begin{equation}\label{eq:skewpart}
p_h(G):=\left\{\begin{array}{rl} s_h(G) & \text{ if }  G \text{ is Eulerian},
\\ 0 & \text{ otherwise.}
\end{array}\right.
\end{equation}
We note that if $h^v$ only depends on $\deg(v)$ and not on the actual vertex $v$, then $p_h$ is a graph invariant, which, following \cite{RS17}, we call a \emph{skew partition function}.
We moreover note that $p_h(\bigcirc)=-2\ell$.
\end{df}

We now turn to the definition of mixed partition functions.
\subsection{Mixed partition functions}
For $k,\ell \in \N$, let $V_k$ be a vector space of dimension $k$ over $\C$ with basis $\{e_1,\ldots,e_k\}$ and let $V_{2 \ell}$ be a vector space of dimension $2\ell$ over $\C$ with basis $\{f_1,\ldots,f_{2\ell} \}$. We call any $h \in (S V_k \otimes \bigwedge V_{2\ell})^*$ a $(k, 2 \ell)$-\emph{color edge coloring model}\index{edge coloring model!mixed}. 

For a graph $G = (V,E)$ and $H \subseteq E$, the subgraph $(V,H)$ of $G$ is denoted by $G(H)$. If $G(H)$ is Eulerian, then we say that $H$ is Eulerian. For $F \subseteq E$ Eulerian, let $\omega$ be an Eulerian orientation of the edges of $G(F)$ and let $\kappa$ be a compatible local pairing of $G(F)$. 
For $v \in V$, let $\delta_{E \setminus F}(v)$ be the set of edges incident with $v$ that are not in $F$. 
For $h \in (S V_k \otimes \bigwedge V_{2\ell})^*$, we now define 
\begin{equation}\label{eq:partialmixed}
s_{h}(G,F,\omega, \kappa):=(-1)^{c(\kappa)} \sum_{\substack{\phi:F \rightarrow [2\ell] \\ \psi:E \setminus F \rightarrow [k]}} \prod_{v \in V} h ( \bigodot_{a \in \delta_{E \setminus F}(v)} e_{\psi(a)} \otimes \bigwedge_{(a_1,a_2) \in \kappa_v} f_{\phi(a_1)} \wedge g_{\phi(a_2)}).
\end{equation}
Fix one coloring $\psi : E \setminus F\to [k]$ and define for $v\in V$, $h_\psi^{v}\in (\bigwedge V_{2\ell})^*$ by 
\[h_\psi^v(f_{i_1}\wedge\cdots \wedge f_{i_{\deg_F(v)}}):=h( \bigodot_{a \in \delta_{E \setminus F}(v)} e_{\psi(a)}\otimes f_{i_1}\wedge\cdots \wedge f_{i_{\deg_F(v)}}) \]
 for $i_1,\ldots,i_{\deg_F(v)}\in [2\ell]$ and let $h^v_\psi$ evaluate to zero on $\bigwedge^{d}V_{2\ell}$ for all $d\neq \deg_F(v)$.
 Then we observe that 
 \[
 s_{h}(G,F,\omega, \kappa)=\sum_{\psi:E\setminus F\to [k]}p_{h_\psi}(G(F)),
 \]
 and hence $s_{h}(G,F,\omega, \kappa)$ does not depend on the choice of $\omega$ and $\kappa$ and we may define $s_{h}(G,F):=s_h(G,F,\omega, \kappa)$ for any choice of Eulerian orientation $\omega$ and compatible labelling~$\kappa$.
\begin{df}
For $h \in (S V_k \otimes \bigwedge V_{2\ell})^*$ the partition function of $h$ is defined by, for a graph $G = (V,E)$,
\begin{equation}\label{eq:mixed}
p_h(G) := \sum_{\substack{ F \subseteq E \\ F \text{ Eulerian}}} s_{h}(G,F). 
\end{equation}
We sometimes refer to the partition function just defined as a \emph{mixed partition function}\index{partition function!mixed} so as to distinguish it from ordinary partition functions and skew partition functions. 
We note that $p_h(\bigcirc) = k-2\ell$. 
\end{df}
We can now state our main theorem concerning the edge-rank connectivity of mixed partition function.

\begin{theorem}\label{thm:finite erc}
If $f$ is the partition function of an element $h \in (S V_k \otimes \bigwedge V_{2\ell})^*$, then 
\begin{equation}
\rk (M_{f,t} )\leq (k+2 \ell)^t \text{ for each } t \in \N.
\end{equation}
\end{theorem}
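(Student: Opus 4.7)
The plan is to factor the connection matrix through the tensor space $W^{\otimes t}$ where $W := V_k \oplus V_{2\ell}$, so that $\dim W^{\otimes t} = (k+2\ell)^t$ bounds the rank. Endow $W$ with the bilinear form $B$ restricting to the symmetric form $B(e_i, e_j) = \delta_{ij}$ on $V_k$, to the symplectic form $B(f_i, g_j) = \delta_{ij}$ on $V_{2\ell}$ (matching the duality \eqref{eq:dual}), and to $0$ between the two summands. The goal is to construct a map $\Phi : \F_t \to W^{\otimes t}$ satisfying
\begin{equation*}
f(F_1 * F_2) \;=\; B^{\otimes t}\bigl(\Phi(F_1), \Phi(F_2)\bigr) \qquad \text{for all } F_1, F_2 \in \F_t.
\end{equation*}
Given such a $\Phi$, the row of $M_{f,t}$ indexed by $F_1$ is determined by $\Phi(F_1) \in W^{\otimes t}$, so the row span has dimension at most $\dim W^{\otimes t} = (k+2\ell)^t$, forcing $\rk(M_{f,t}) \leq (k+2\ell)^t$.

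To build $\Phi(F)$ for a $t$-fragment $F = (V,E)$ with open ends $o_1, \ldots, o_t$, I would imitate the definition of $p_h$ on the interior of $F$ while leaving the open ends as free tensor indices. Concretely, sum over pairs $(T, F_0)$, where $T \subseteq [t]$ records which open ends are treated as Eulerian and $F_0$ is a subset of the internal edges such that $F_0 \cup \{o_i : i \in T\}$ has even degree at every internal vertex. For each such pair, choose an Eulerian orientation $\omega$ and compatible local pairing $\kappa$ of the half-Eulerian subgraph consisting of $F_0$ together with the open ends in $T$, and sum over colorings $\phi$ of the Eulerian edges into $[2\ell]$ and $\psi$ of the remaining edges into $[k]$, weighted by $(-1)^{c(\kappa)}$ and the product over internal vertices of the $h$-factor from \eqref{eq:partialmixed}. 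The resulting summand contributes $e_{\psi(o_i)} \in V_k$ at tensor position $i$ if $i \notin T$, and either $f_{\phi(o_i)}$ or $g_{\phi(o_i)} \in V_{2\ell}$ if $i \in T$, the choice dictated by whether $o_i$ is incoming or outgoing under $\omega$ at its internal endpoint. Well-definedness (independence of the choices of $\omega$ and $\kappa$) follows by the same argument as Proposition \ref{prop:inv keuze}.

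To verify the factorization, I would expand $f(F_1 * F_2) = \sum_{F \text{ Eulerian}} s_h(F_1 * F_2, F)$ and decompose each Eulerian edge set as a triple $(T, F_0^{(1)}, F_0^{(2)})$, where $T \subseteq [t]$ selects which glued edges lie in $F$ and $F_0^{(j)}$ collects the internal edges of $F_j$ in $F$. The Eulerian condition on $F$ splits precisely into the half-Eulerian conditions on each $(T, F_0^{(j)})$ used in the definition of $\Phi(F_j)$. The sum over colorings of $F_1 * F_2$ then factors: at each glued edge labeled $i$, the sum over the shared color produces $\sum_c e_c \otimes e_c$ if $i \notin T$ and $\sum_c f_c \otimes g_c$ if $i \in T$, which are exactly the coevaluation tensors for $B$ restricted to $V_k$ and to $V_{2\ell}$ respectively; contracting by $B^{\otimes t}$ therefore reassembles the full coloring sum on $F_1 * F_2$. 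The principal obstacle will be checking that the sign $(-1)^{c(\kappa)}$ attached to the global pairing $\kappa$ factors cleanly across the gluing: one has to choose local pairings $\kappa^{(j)}$ on each $F_j$ that patch to a $\kappa$ on $F_1 * F_2$, show that the circuit-count discrepancy $c(\kappa) - c(\kappa^{(1)}) - c(\kappa^{(2)})$ is absorbed by the wedge-reordering signs that appear when factors $f_{\phi(a_1)} \wedge g_{\phi(a_2)}$ get split between the two sides, and verify that the signs in the duality $g_i = \pm f_{i \pm \ell}$ combine correctly with $B(f_c, g_c)$. Once this sign bookkeeping is settled, the rank bound is immediate.
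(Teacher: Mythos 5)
Your overall strategy is exactly the paper's: realize $M_{f,t}$ as a Gram matrix in $W^{\otimes t}$, $W = V_k \oplus V_{2\ell}$, with a bilinear form that is symmetric on $V_k$, symplectic on $V_{2\ell}$, and zero between them, and build the fragment tensor by summing over which open ends (your $T$, the paper's $S(H)$) are ``Eulerian'' together with an Eulerian subset $F_0$ of internal edges. You also correctly identify that the whole difficulty is the sign bookkeeping. But the proposal does not resolve that difficulty, and moreover the one concrete claim you make about it is wrong: you assert that well-definedness of $\Phi(F)$ in $\omega$ and $\kappa$ ``follows by the same argument as Proposition \ref{prop:inv keuze}.'' That proposition is for closed Eulerian graphs, where the $\kappa$-decomposition consists only of \emph{circuits}. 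For a fragment the $\kappa$-decomposition also contains \emph{directed trails} from one labeled vertex to another, and reversing such a trail flips the sign of your summand (a trail has an odd total count of arcs and pairings, so it contributes an odd number of sign changes). Hence the raw tensor you describe, call it $t'_h(F,H,\omega,\kappa)$, is \emph{not} invariant under the choice of $\omega$ — it genuinely changes sign.

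This is precisely what the paper has to repair, and it requires a new ingredient you do not supply: an explicit correcting sign $(-1)^{|S|/4}\,\mathrm{sgn}(M(\omega,\kappa))$, where $M(\omega,\kappa)$ is the directed perfect matching on $S$ recording the endpoints of the trails, together with Lemma~\ref{matching sign} about signs of permutations between directed matchings. The corrected tensor $t_h = (-1)^{|S|/4}\,\mathrm{sgn}(M(\omega,\kappa))\,t'_h$ is then well-defined (both factors flip sign under a trail reversal), and Lemma~\ref{matching sign} is exactly what translates $\mathrm{sgn}(M(\omega_1,\kappa_1))\,\mathrm{sgn}(M(\omega_2,\kappa_2))$ into $(-1)^{c(M(\omega_1,\kappa_1)\cup M(\omega_2,\kappa_2))}$, which recreates the missing circuit count $(-1)^{c(\kappa)} / \bigl((-1)^{\hat c(\kappa_1)}(-1)^{\hat c(\kappa_2)}\bigr)$ when two sets of trails are glued into circuits. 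The remaining $(-1)^{|S|/4}$ on each side absorbs the $\pm 1$'s coming from $\langle f_c, g_c\rangle = -1$ versus $\langle g_c, f_c\rangle = 1$ at the $|S|$ glued symplectic ends. Without this correction your map $\Phi$ is not well-defined and the factorization identity cannot even be stated cleanly; the gap is real, not merely deferred bookkeeping. (A minor further confusion: you speak of wedge factors $f_{\phi(a_1)}\wedge g_{\phi(a_2)}$ being ``split between the two sides,'' but pairings live only at internal vertices; the labeled degree-one vertices carry no $h$-factor, so nothing is split — the discrepancy is entirely a circuit-count and matching-sign phenomenon.)
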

We will prove this theorem in Section \ref{sec:rankgrowth}. 
First we give several examples of mixed partition functions in the next section.
\section{Examples of mixed partition functions}\label{sec:examples}
We start with some basic examples.
\subsection{Basic examples}
\begin{ex}
Let $h$ be a $(k,0)$-color edge coloring model. For a graph $G = (V,E)$ and $F \subseteq E$, we have that $s_{h}(G,F) = 0$ unless $F = \emptyset$. So $p_h(G)$ reduces to 
\[
s_{h}(G,\emptyset) = \sum_{\psi: E \rightarrow [k]} \prod_{v \in V} h(\bigodot_{a \in \delta(v)} e_{\psi(a)}).
\]
So we see that $p_h$ is an ordinary partition function as in (\ref{eq:pf}). We similarly see that if $h$ is a $(0,2\ell)$-color edge coloring model that $p_h$ reduces to $s_h(G,E)$. So $p_h$ is a skew partition function as in (\ref{eq:skewpart}). 
\end{ex}

\begin{ex}
If $h_0 \in (S V_k)^*$ and $h_1 \in (\bigwedge V_{2\ell})^*$, then let $h = h_0 \otimes h_1 \in (S V_k \otimes \bigwedge V_{2\ell})^*$. For a graph $G = (V,E)$ and $F \subseteq E$ Eulerian, it follows directly from (\ref{eq:partialmixed}) that $s_{h}(G,F) = p_{h_0}(G(E\setminus F)) p_{h_1}(G(F))$. So we find that 
\begin{equation}\label{eq:mixedtensor}
p_h(G) = \sum_{\substack{F \subseteq E \\ F \text{ Eulerian}}} p_{h_0}(G(E\setminus F)) p_{h_1}(G(F)). 
\end{equation}
\end{ex}

We now move to a few more involved examples of mixed partition functions. 

\subsection{The characteristic polynomial}\label{sec:spin}
In this subsection we assume that our graphs do not have the circle, $\bigcirc$, as a connected component. 
The \emph{adjacency matrix} $A$ of $G$ is the $V \times V$ matrix such that for $i,j \in V$ with $i \neq j$, $A(i,j)$ is the multiplicity of the edge $\{i,j\}$ in $E$ and such that for $i \in V$, $A(i,i)$ is twice the number of loops at the vertex $i$. 
The \emph{characteristic polynomial} $p(G)$ of $G$ is defined as $p(G;t): = \text{det}(tI-A)$. 
We show below that for each $t$ there exists an $(2,2)$-edge-coloring model $h(t)$ such that $p(G;t)=p_{h(t)}(G)$ for all graphs $G$, see Proposition~\ref{prop:char pol} below.
First we turn to a question of de la Harpe and Jones concerning spin models and the characteristic polynomial.
To this end give the definition of the partition function of a spin model. We follow the definition of de la Harpe and Jones \cite{HJ}. 

Let $n \in \N$ and let $B$ be a symmetric $n \times n$ matrix with values in some commutative ring $R$. We call $B$ a \emph{spin model over $R$}. 
The partition function, $p_B$, of the spin model $B$ is defined, for a graph $G = (V,E)$, by
\begin{equation}\label{eq:vertexmodel}
p_{B}(G) := \sum_{\kappa:V \rightarrow [n]} \prod_{\{v_1,v_2\} \in E} B(\kappa(v_1),\kappa(v_2)).
\end{equation}
Note that this is well-defined as $B$ is a symmetric matrix. 

De la Harpe and Jones~\cite[Problem 1]{HJ} asked about the existence of a spin model $B$ over $\C[t]$, such that $p_{B}(G) = p(G;t)$ for each graph $G$. 
In the following proposition we shall show that the answer to this question is negative.
In fact, we show something stronger. 

\begin{proposition}\label{prop:no ecm}
There does not exist an edge coloring model $h$ such that $p_h(G) = p(G;0)$ for all graphs $G$.
\end{proposition}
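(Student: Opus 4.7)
The plan is to test the purported identity $p_h = p(\,\cdot\,;0)$ against the family of cycle graphs $C_n$ and derive a contradiction from a moment-type argument. First, suppose for contradiction that $h\in(SV_k)^*$ satisfies $p_h(G)=p(G;0)$ for all $G$, and form the symmetric matrix $M\in\C^{k\times k}$ by $M_{ij}:=h(e_i\odot e_j)$. For each $n\geq 1$ let $C_n$ denote the graph with a single loop (when $n=1$), a pair of vertices joined by a double edge (when $n=2$), or the ordinary $n$-cycle (for $n\geq 3$). A direct unrolling of \eqref{eq:pf} gives $p_h(C_n)=\tr(M^n)$ for every $n\geq 1$, because every vertex of $C_n$ has degree $2$ and the contraction around the cycle produces a matrix product.

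Second, I would compute the right-hand side. Since the eigenvalues of $A(C_n)$ are $2\cos(2\pi j/n)$ for $j=0,\dots,n-1$, one obtains
\[
p(C_n;0)=(-1)^n\det A(C_n)=\begin{cases}-2&n\text{ odd},\\ -4&n\equiv 2\pmod 4,\\ 0&n\equiv 0\pmod 4.\end{cases}
\]
This $4$-periodic sequence admits the explicit expansion $p(C_n;0)=(-2)\cdot 1^n+1\cdot i^n+1\cdot(-i)^n$ for $n\geq 1$, which is immediate to verify case by case (or derived via a short discrete Fourier transform on $\Z/4\Z$).

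Combining the two evaluations would give $\tr(M^n)=(-2)\cdot 1^n+1\cdot i^n+1\cdot(-i)^n$ for every $n\geq 1$. On the other hand $\tr(M^n)=\sum_{\lambda}m_\lambda\lambda^n$, where $m_\lambda\in\Zp$ is the algebraic multiplicity of $\lambda$ as an eigenvalue of $M$, and the geometric sequences $(\lambda^n)_{n\geq 1}$ for distinct nonzero $\lambda\in\C$ are linearly independent. Matching coefficients then forces $m_1=-2$, which is absurd, so no such $h$ can exist. The main obstacle is really just choosing the right test graphs: cycles are the natural choice because they turn $p_h$ into traces of powers of a fixed matrix, after which the non-negative-integer-multiplicity constraint for eigenvalues of $M$ becomes a sharp obstruction.
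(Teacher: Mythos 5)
Your proof is correct, but it takes a genuinely different route from the paper. The paper invokes the characterization theorem of Draisma, Gijswijt, Lov\'asz, Regts and Schrijver \cite[Theorem 1]{DGLRS12}: if $p(\cdot\,;0)$ were the partition function of a $k$-color model, then the signed sum $\sum_{\pi\in S_{k+1}}\mathrm{sgn}(\pi)\,p(G_\pi;0)$ over permutations of the endpoints of $k+1$ marked edges in $k+1$ disjoint $6$-cycles would vanish; using the signs of $\det(C_m)$ for $m$ a multiple of $6$ it shows all surviving terms have the same sign, so the sum is nonzero. You instead argue directly and spectrally: on cycles, $p_h(C_n)=\tr(M^n)$ with $M_{ij}=h(e_i\odot e_j)$ symmetric, while $p(C_n;0)=-2\cdot 1^n+i^n+(-i)^n$, so by linear independence of the sequences $(\lambda^n)_{n\ge 1}$ for distinct nonzero $\lambda$ (and the fact that $\tr(M^n)=\sum_\lambda m_\lambda\lambda^n$ with algebraic multiplicities $m_\lambda\in\Zp$, valid without diagonalizability, which complex symmetric matrices need not enjoy) the eigenvalue $1$ would need multiplicity $-2$. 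Your route is more elementary and self-contained --- it needs no external characterization theorem --- and it isolates the obstruction as a ``virtual spectrum'' $\{1,1\}$ with multiplicity $-2$ together with $\pm i$, which dovetails nicely with Proposition~\ref{prop:char pol}, where exactly such negative cycle-contributions are supplied by the exterior part of a $(2,2)$-color model; the paper's route, in exchange, stays inside the algebraic framework (the antisymmetrizer identities) that underlies its broader program and uses only simple graphs. One cosmetic remark: the cases $n=1,2$ of $p_h(C_n)=\tr(M^n)$ rely on the convention that a loop contributes two edge-ends at its vertex (consistent with the paper's definition of the adjacency matrix); since linear independence of $(\lambda^n)$ already holds over $n\ge 3$, you could restrict to simple cycles and sidestep this convention entirely.
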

This proposition is indeed stronger than we need, since, by a result of Szegedy~\cite{Sz7}, the partition function of any spin model is equal to the partition function of an ordinary edge coloring model and hence Proposition~\ref{prop:no ecm} rules out the existence of a spin model of which the partition function equals the characteristic polynomial evaluated at $0$ and therefore provides a negative answer to the question of de la Harpe and Jones.
Proposition~\ref{prop:char pol} below may serve as an alternative answer to the question of de la Harpe and Jones.

We now turn to a proof of Proposition~\ref{prop:no ecm}.
\begin{proof}[Proof of Proposition~\ref{prop:no ecm}]
Let us abuse notation and write $\det(G)$ for the determinant of the adjacency matrix of $G$.
Note that for a graph with an even number of vertices we have $p(G;0)=\det(G)$.
We will make use of the characterization of partition functions of edge coloring models as given in \cite{DGLRS12}.
Fix $k$ and consider the graph $G$ consisting of $k+1$ copies of the $6$-cycle $C_6$.
Direct one edge in each cycle and label the endpoints of these arcs $1$ up to $k+1$. For a permutation $\pi\in S_{k+1}$, denote by $G_\pi$ the graph obtained from $G$ by letting $\pi$ permute the endpoints of the directed edges.
Note that if the permutation $\pi$ can be written as the product of disjoint cycles $\pi_1,\dots,\pi_t$, then $G_\pi$ is the graph consisting of $t$ cycles, of length $6|\pi_1|, \dots, 6|\pi_t|$ respectively. 
Here $|\pi_i|$ denotes the length of the cycle $\pi_i$; we include cycles of length $1$.
If $p(G;0)$ is the partition function of a $k$-color edge coloring model, then, by \cite[Theorem 1]{DGLRS12}, it must satisfy
\begin{equation}\label{eq:constraint ecm}
\sum_{\pi\in S_{k+1}} \text{sgn}(\pi) p(G_\pi;0)=0.
\end{equation}
It follows from, for example, \cite[Section 1.4.3]{BH}, that $\det(C_k)=0$ if $k=0\mod 4$ and $\det(C_k)<0$ if $k=2 \mod 4$.
This implies that for $\det(G_\pi)$ to be non-zero none of the cycles $\pi_1, \dots, \pi_t$ may be of even length. However, if all cycles in the cycle decomposition of $\pi$ are of odd length, then the parity of the number of these cycles is equal to the parity of $k+1$.
So in this case, $\det(G_\pi)$ is strictly positive if this parity is even and strictly negative if this parity is odd for all such permutations $\pi$. As all cycles of $\pi$ are odd we have $\text{sgn}(\pi) = 1$. 
Since $\det(G_\pi)=p(G_\pi;0)$ for all $\pi$, this shows that $\sum_{\pi\in S_{k+1}} \text{sgn}(\pi) p(G_\pi;0)$ is either strictly positive or strictly negative. So it is non-zero.
So we conclude that \eqref{eq:constraint ecm} is violated and hence that $p(\cdot;0)$ cannot be the partition function of any edge coloring model.
\end{proof}

\begin{proposition}\label{prop:char pol}
For each $t \in \C$, there exists a $(2,2)$-color edge coloring model $h(t)$ such that $p_{h(t)}(G) = p(G;t)$ for all graphs $G$.
\end{proposition}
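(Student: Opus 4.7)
The plan is to construct an explicit $h=h(t)\in(SV_2\otimes\bigwedge V_2)^*$ and verify $p_h(G)=p(G;t)$ by matching the expansion of $p_h(G)$ to the Harary--Sachs formula
$$p(G;t)=\sum_{H}(-1)^{p(H)}2^{c(H)}t^{|V(G)|-|V(H)|},$$
where the sum runs over edge sets $H\subseteq E$ each of whose connected components is either a single edge or a cycle (with loops treated as $1$-cycles and double edges as $2$-cycles), $p(H)$ is the number of components, and $c(H)$ is the number of cycle components. The guiding intuition is that the skew partition function of Definition~\ref{df:skewpart} already produces a cycle weight of $-2$, so the exterior tensor factor should control the cycle part of $H$; the symmetric factor then has to encode the matching weight $(-1)^{|M|}$ together with the isolated-vertex weight $t^{|V(G)|-|V(H)|}$.

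Accordingly, I would define $h(t)$ on the standard basis of $SV_2\otimes\bigwedge V_2$ by
\begin{align*}
h(e_1^{\odot a}\odot e_2^{\odot b}\otimes 1)&=\begin{cases}t&\text{if }b=0,\\ \i&\text{if }b=1,\\ 0&\text{if }b\geq 2,\end{cases}\\
h(e_1^{\odot a}\odot e_2^{\odot b}\otimes f_1\wedge f_2)&=\begin{cases}-1&\text{if }b=0,\\ 0&\text{if }b\geq 1,\end{cases}
\end{align*}
and as $0$ on $e_1^{\odot a}\odot e_2^{\odot b}\otimes f_i$ for $i=1,2$, where $\i=\sqrt{-1}$. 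Unpacking $p_h(G)=\sum_{F\text{ Eulerian}}s_h(G,F)$, I would then cut the sum down to Sachs-type configurations. Since $\bigwedge^d V_2=\{0\}$ for $d\geq 3$ and since $F$ Eulerian forces $\deg_F(v)$ to be even, at every vertex $\deg_F(v)\in\{0,2\}$, so $F$ is a vertex-disjoint union of cycles. The vanishing of $h$ for $b\geq 2$ (resp.\ $b\geq 1$) in the first (resp.\ second) line of the definition forces the color-$2$ edges of $\psi:E\setminus F\to[2]$ to form a matching $M$ with $V(M)\cap V(F)=\emptyset$, so the surviving pairs $(F,M)$ are exactly the Sachs pairs with $H=F\cup M$.

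For a fixed such $(F,M)$, a direct calculation using $g_1=-f_2$ and $g_2=f_1$ shows that the $V_2$-endomorphism $T$ with matrix $T_{ij}:=h(1\otimes f_i\wedge g_j)$ is the identity (and the same value obtains regardless of how many extra color-$1$ symmetric factors are present at a cycle vertex). Summing over $\phi:F\to[2]$ then gives $\prod_{C\subseteq F}\tr(T^{|C|})=2^{c(F)}$; combined with the sign $(-1)^{c(\kappa)}=(-1)^{c(F)}$, the matched-vertex contribution $\i^{2|M|}=(-1)^{|M|}$, and the isolated-vertex contribution $t^{|V(G)|-|V(H)|}$, the total weight of $(F,M)$ becomes $(-1)^{p(H)}2^{c(H)}t^{|V(G)|-|V(H)|}$, matching the Harary--Sachs term for $H$. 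The main obstacle is guessing the correct $h(t)$: once its definition is in hand, the reduction to Sachs pairs and the identity $T=I$ both follow immediately, and term-by-term comparison with the Harary--Sachs expansion concludes the proof.
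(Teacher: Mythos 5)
Your proposal is correct and follows essentially the same route as the paper: the same Sachs expansion, the identical $(2,2)$-color model (your values on $e_1^{\odot a}\otimes f_1\wedge f_2$, $e_1^{\odot a}\odot e_2$, $e_1^{\odot a}$ coincide with the paper's $h(e_1^{\odot i}\otimes f_1\wedge g_1)=1$, $h(e_1^{\odot i}\odot e_2)=\sqrt{-1}$, $h(e_1^{\odot i})=t$ since $f_1\wedge g_1=-f_1\wedge f_2$), and the same reduction of the surviving configurations to Sachs pairs with weight $(-1)^{e^*(H)}(-2)^{c(H)}t^{|V|-|V[H]|}$. The only cosmetic difference is that you package the cycle contribution as a transfer-matrix trace $\tr(T^{|C|})$ with $T=I$, where the paper simply notes that the nonzero colorings $\phi$ are exactly those constant on each $2$-regular component, giving $2^{c(F)}$.
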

\begin{proof}
Using the Leibniz expansion of the determinant, Sachs \cite{S67} gave an expression of the characteristic polynomial of a graph $G$ in terms of certain subgraphs of $G$. The expression extends to graphs with multiple edges and loops. Let $G = (V,E)$ be a graph. Let $\mathcal{H}$ be the set of $H \subseteq E$ such that each connected component of $G(H)$ is either a vertex, an edge or a cycle. For $H \in \mathcal{H}$, let $e^*(H)$ and $c(H)$ denote the number of connected components of $G(H)$ that are edges and cycles respectively. Let $V[H] \subseteq V$ be the set of vertices of $G$ that are incident with an edge of $H$. Then Sachs showed that 
\begin{equation}\label{charpoly}
p(G;t) = \sum_{H \in \mathcal{H}} (-1)^{e^*(H)} (-2)^{c(H)} t^{|V|-|V[H]|}.
\end{equation}
We now give a $(2,2)$-color edge coloring model $h=h(t)$ such that $p_h(G) = p(G;t)$ for each $t\in \C$ and graph $G$. Let $h$ be defined as follows:
\begin{align*}
&h(e_1^{\odot i} \otimes f_1 \wedge g_1) = 1 \text{ for } i \in \N, \\
&h(e_1^{\odot i} \odot e_2) = \sqrt{-1} \text{ for } i \in \N, \\
&h(e_1^{\odot i}) = t \text{ for } i \in \N,
\end{align*}
and let $h$ evaluate to $0$ on basis elements of $S V_2 \otimes \bigwedge V_{2}$ that are not in the span of these elements. 
Now let $F \subseteq E$ be Eulerian. We compute $s_h(G,F)$. If $G(F)$ has a vertex that is not of degree $0$ or $2$, then $s_h(G,F) = 0$. 

So let us assume that each vertex of $G(F)$ has degree $0$ or $2$. 
Let $\omega$ be an Eulerian orientation of $F$ with a compatible local pairing $\kappa$ of $G(F)$. Now let $\phi: F \rightarrow [2]$ and $\psi : E \setminus F \rightarrow [2]$. We first note that for the contribution of $\phi$ and $\psi$ to $s_{h}(G,F,\omega,\kappa)$ to be non-zero, we need $\psi^{-1}(2)$ to be a matching in $G$ that is not incident with any edge in $F$. 

Now fix $\psi: E \setminus F \rightarrow [2]$ such that $\psi^{-1}(2)$ is a matching in $G$ that is not incident with any edge in $F$. Let $H = F \cup \psi^{-1}(2) \subseteq E$. Note that at each vertex $v \in V$ that is not incident with $H$, we see $h(e_1^{\odot d(v)}) = t$. There are $|V|-|V[H]|$ such vertices $v$. If $v,u \in V$ are two vertices such that $\{u,v\}$ is an isolated edge of $(V,H)$, then at $u$ we see $h(e_1^{\odot d(u)-1} \odot e_2) = \sqrt{-1}$ and at $v$ we see $h(e_1^{\odot d(v)-1} \odot e_2) = \sqrt{-1}$. So two vertices $u,v$ such that $\{u,v\}$ is an isolated edge of $(V,H)$ contribute $-1$ to the partition function. 

Now consider the colorings $\phi: F \rightarrow [2]$. Such a $\phi$ gives a non-zero contribution if and only if it is constant on the edges of each $2$-regular connected component of $(V,F)$. So there are exactly $2^{c(F)}$ colorings $\phi: F \rightarrow [2]$ that have a non-zero contribution. 
As $c(\kappa)=c(F)=c(H)$, we see that the total contribution to the partition function of these colorings is exactly
\begin{equation}\label{eq:char eq}
 (-1)^{e^*(H)} (-2)^{c(H)} t^{|V|-|V[H]|}.
\end{equation}
Now summing over all $F$ and corresponding $\phi$ and $\psi$, we find that $p_h(G)$ is indeed equal to $p(G;t)$ by (\ref{charpoly}). 
\end{proof}

\subsection{Evaluations of the circuit partition polynomial}
The circuit partition polynomial, introduced, in a slightly different form, by Martin in his thesis~\cite{Martin}, is related to Eulerian walks in graphs and to the Tutte polynomial of planar graphs. 
Several identities for the circuit partition polynomial were established by Bollob\'as~\cite{Bol} and Ellis-Monaghan~\cite{EM}.

We say that two circuits are equivalent if one can be obtained from the other by possibly changing the starting vertex or the direction of the walk. For a graph $G = (V,E)$, let $X(G)$ be a set of representatives of this equivalence relation. Let $\mathcal{C}(G)$ be the collection of all partitions of $E$ into circuits in $X(G)$. For $C \in \mathcal{C}(G)$, let $|C|$ be the number of circuits in the partition. 

The circuit partition polynomial $J(G,x)$ is defined for a graph $G$ by
\[
J(G,x) := \sum_{C \in \mathcal{C}(G)} x^{|C|}. 
\]
So if $G$ is not an Eulerian graph, then $J(G,x) = 0$. 
We clearly have that $J(G\cup H,x) = J(G,x) J(H,x)$ for two graphs $G$ and $H$ and it is natural to define $J(\bigcirc,x) = x$. 
We shall show that every integer evaluation of the circuit partition polynomial can be expressed as a mixed partition function.
We first recall the result from \cite{RS17} that positive and negative even integer evaluations can be expressed as ordinary partition functions and skew partition functions respectively, after which we show that odd negative evaluations can be expressed as mixed partition functions (with both $k$ and $2\ell$ positive).

For $k \in \N$, it was shown in \cite{Bol,EM} that $J(G,k)$ can be expressed as
\begin{equation}
J(G,k) = \sum_{A} \prod_{v \in V} \prod_{i=1}^k (\text{deg}_{A_i}(v) - 1)!!,\label{eq:cyc part pos}
\end{equation}
where $A$ ranges over ordered partitions of $E$ into $k$ subsets $A_1,\ldots,A_k$ such that $A_i$ is Eulerian for all $i \in [k]$. We use the convention that for a nonnegative integer $d$, $(2d-1)!!=(2d-1)\cdot(2d-3)\cdots 3\cdot 1$ and $(2d)!!=0$.

We express \eqref{eq:cyc part pos} as the partition function of $h_0 \in (S V_k)^*$ as follows. 
For $(\alpha_1,\ldots,\alpha_k) \in \N^k$, we set 
\begin{equation}\label{eq:hzero}
h_0(\bigodot_{i \in [k]} e_i^{\odot \alpha_i}):=\prod_{i=1}^k (\alpha_i-1)!!.
\end{equation}
Using \eqref{eq:cyc part pos} it is not difficult to see that $p_{h_0}(G)=J(G,k)$ for each graph $G$, cf. \cite{RS17}. 

Bollob\'as \cite{Bol} showed that the evaluation of the circuit partition polynomial $J(G,x)$ of a graph at negative even integers $-2 \ell$ can be expressed as
\begin{equation} \label{eq:bolpol}
J(G,-2 \ell)=\sum_{H_1,\ldots,H_\ell}(-2)^{\sum_{i=1}^\ell c(H_i)},
\end{equation}
where this sum runs over all ordered partitions $H_1,\ldots,H_\ell$ of the edge set of $G$ such that for each $i \in [\ell]$ each vertex in $(V,H_i)$ has degree $0$ or degree $2$ and where $c(H_i)$ is the number of $2$-regular connected components of $(V,H_i)$. Now let $h_1 \in (\bigwedge V_{2\ell})^*$ be defined, for $S \subseteq [ \ell]$, by 
\begin{equation}\label{eq:cycletensor}
h_1( \bigwedge_{i \in S} f_i \wedge g_i) = 1;
\end{equation}
$h_1$ evaluates to zero on all other basis elements not in the span of the elements above.
In \cite{RS17} it is shown, using \eqref{eq:bolpol}, that $p_{h_1}(G) = J(G,-2\ell)$. 

We will next show that mixed partition functions can also express evaluations of the circuit partition polynomial at negative odd integers.
In \cite{EM}, Ellis-Monaghan showed for a graph $G = (V,E)$ that
\begin{equation} \label{eq:EMpol}
J(G,x+y) = \sum_{A \subseteq E} J(G(A),x)J(G({E \setminus A}), y).
\end{equation}
Now, for a negative odd integer $-2\ell+1$, let $h_0 \in (S V_1)^*$ correspond to $k=1$ in (\ref{eq:hzero}) and let $h_1 \in (\bigwedge V_{2\ell})^*$ be as in (\ref{eq:cycletensor}). Let $h = h_0 \otimes h_1 \in (S V_1 \otimes \bigwedge V_{2\ell})^*$. Then by (\ref{eq:mixedtensor}) and (\ref{eq:EMpol}) we find that $p_h(G) = J(G,-2\ell+1)$, giving us an expression of $J(G,-2\ell+1)$ as a mixed partition function.

\begin{rem}
Let us finally remark that the cycle partition polynomial evaluated at $x\notin \mathbb{Z}$ cannot be realized as the mixed partition function of any edge-coloring model. This follows from Theorem~\ref{thm:finite erc} and the fact that by \cite[Proposition 2]{S15} the rank of the submatrix of $M_{J(\cdot,x),2n}$, indexed by fragments in which each component consists of an unlabeled vertex with two open ends incident with it, has rank at least $n!$, cf.~\cite{RS17}.
\end{rem}
\section{The edge-rank connectivity of mixed partition functions}\label{sec:rankgrowth}

In this section we prove Theorem \ref{thm:finite erc}.

We first show a lemma on matchings that will be useful later on. If $M$ and $N$ are directed perfect matchings on the same vertex set, then we denote by $o(M\cup N)$ the parity of the number of arcs in $M\cup N$ that need to be flipped to make $M\cup N$ into an Eulerian digraph. Note that we call a digraph Eulerian if for each vertex its in degree is equal to its out degree. Since each cycle in $M\cup N$ has even length this is well defined. Recall that $c(M\cup N)$ is the number of connected components of $M\cup N$. 

\begin{lemma}\label{matching sign}
Let $M$ and $N$ be two directed perfect matchings on $[2m]$ for $m \in \N$. Then the sign of any permutation in $S_{2m}$ that sends $N$ to $M$ is equal to $(-1)^{c(M\cup N) + o(M\cup N)}$. 
\end{lemma}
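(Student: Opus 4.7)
The plan is to prove the lemma in three steps: (i) establish that the claimed sign is well-defined on the set of permutations sending $N$ to $M$; (ii) reduce to the case where $M\cup N$ is a single cycle; (iii) handle the single cycle by standardising the arc directions and reading off the sign from a single cyclic shift.

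For (i), any element of $\text{Stab}(N)$ permutes the $m$ arcs of $N$ as ordered pairs and so is generated by ``arc swaps'' $(a_i\,a_j)(b_i\,b_j)$ that exchange arcs $(a_i,b_i)$ and $(a_j,b_j)$. Each such swap is a product of two disjoint transpositions and hence even, so $\text{Stab}(N)\subseteq A_{2m}$ and the sign of any $\pi$ with $\pi(N)=M$ depends only on $N$ and $M$. Moreover, $o(M\cup N)$ is well-defined because each component of $M\cup N$ is an even cycle and its two Eulerian orientations are obtained from one another by flipping all arcs; the two corresponding flip-sets on a cycle of length $2k$ have sizes summing to $2k$, so the same parity.

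For (ii), since every vertex lies in one $N$-arc and one $M$-arc, $M\cup N$ is $2$-regular and decomposes into alternating cycles $C_1,\ldots,C_c$ on vertex sets $V_1,\ldots,V_c$, with induced directed matchings $N_i,M_i$ on each $V_i$. Using the single-cycle construction below I pick, for each $i$, a permutation $\pi_i$ of $V_i$ with $\pi_i(N_i)=M_i$ and set $\pi:=\pi_1\cdots\pi_c$, which sends $N$ to $M$. Since the sign is multiplicative on disjoint permutations and both $c(\,\cdot\,)$ and $o(\,\cdot\,)$ are additive over components, it suffices to treat a single cycle.

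For (iii), assume $M\cup N$ is a single cycle of length $2k$ with vertices $v_0,\ldots,v_{2k-1}$ in cyclic order, $N$-edges $\{v_{2j},v_{2j+1}\}$ and $M$-edges $\{v_{2j+1},v_{2j+2}\}$ (indices mod $2k$). Let $\alpha_j\in\{0,1\}$ be $0$ iff the $j$-th $N$-arc is oriented forward along the cycle, and define $\beta_j$ analogously for the $M$-arcs; flipping all backward arcs yields an Eulerian orientation, so $o(M\cup N)\equiv\sum_j\alpha_j+\sum_j\beta_j\pmod 2$. Let $N'':=\{(v_{2j},v_{2j+1})\}_j$ and $M'':=\{(v_{2j+1},v_{2j+2})\}_j$ be the all-forward matchings, let $\sigma$ be the product of the disjoint transpositions $(v_{2j}\,v_{2j+1})$ over $j$ with $\alpha_j=1$, and let $\tau$ be the analogous product of disjoint transpositions $(v_{2j+1}\,v_{2j+2})$ over $j$ with $\beta_j=1$. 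Then $\sigma(N'')=N$, $\tau(M'')=M$, $\text{sgn}(\sigma)=(-1)^{\sum_j\alpha_j}$, and $\text{sgn}(\tau)=(-1)^{\sum_j\beta_j}$. The cyclic shift $\pi''(v_i):=v_{i+1\bmod 2k}$ sends $N''$ to $M''$ and, being a single $2k$-cycle, has sign $(-1)^{2k-1}=-1$. Hence $\pi:=\tau\pi''\sigma^{-1}$ sends $N$ to $M$ with
\[
\text{sgn}(\pi)=(-1)^{\sum_j\alpha_j+\sum_j\beta_j+1}=(-1)^{o(M\cup N)+1}=(-1)^{c(M\cup N)+o(M\cup N)},
\]
as $c=1$. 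The main obstacle is purely bookkeeping: being careful that the transpositions within $\sigma$ (and within $\tau$) are pairwise disjoint so that their signs really multiply as asserted, and that $o$ is independent of the chosen flip sequence. The rest is a direct sign calculation.
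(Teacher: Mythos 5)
Your proof is correct and follows essentially the same route as the paper: reduce to a single alternating cycle, flip the backward arcs by products of disjoint transpositions (your $\sigma,\tau$, the paper's $\sigma_1,\sigma_2$), and compose with the cyclic shift of sign $-1$, so that the total sign is $(-1)^{o(M\cup N)+1}=(-1)^{c(M\cup N)+o(M\cup N)}$ per component. The only difference is that you spell out the well-definedness of the sign (evenness of the stabilizer of a directed matching) and of $o(M\cup N)$, which the paper asserts without detail.
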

\begin{proof}
Note that all permutations that send $M$ to $N$ have the same sign, as all permutations in $S_{2m}$ that stabilize $M$ have trivial sign. We may assume that $M\cup N$ consists of a single connected component.
Let $\sigma_1,\sigma_2 \in S_{2m}$ be permutations that flip arcs of $M$ and $N$ respectively, such that $\sigma_1 M\cup \sigma_2 N$ has an Eulerian orientation. If the vertices of the cycle are given by $v_1,v_2,\ldots,v_{2m}$ in cyclic order, then the permutation $\tau = (v_1 v_2 \dots v_{2m})$ has the property that $\tau \sigma_1 M = \sigma_2 N$. 
As $2m$ is even, the sign of $\tau$ is $-1$. So the permutation $\sigma_2^{-1} \tau \sigma_1$ sends $M$ to $N$. Per construction we have that $\text{sgn}(\sigma_1)\text{sgn}(\sigma_2) = (-1)^{o(M\cup N)}$. This proves the lemma. 
\end{proof}
In the proof we will make use of some linear algebra that we will now define. Let $(\cdot,\cdot)$ be the nondegenerate symmetric bilinear form on $V_k=\C^k$ given by $(x,y):=x^Ty$ for $x,y\in \C^k$.
Let $\langle\cdot,\cdot \rangle$ be the nondegenerate skew-symmetric bilinear form on $V_{2 \ell}=\C^{2\ell}$ given by $\la x,y\ra=x^TJy$ for $x,y\in \C^{2\ell}$,  where $J$ is the $2\ell\times 2\ell$ matrix 
 $\left (\begin{array}{rr} 0&I\\-I&0\end{array}\right)$, with $I$ the $\ell\times \ell$ identity matrix.
We define  $V_{k,2\ell}:=V_k\oplus V_{2\ell}$. We write an element $w$ of $V_{k,2\ell}$ as $w_{\bar{0}} + w_{\bar{1}}$, where $w_{\bar{0}} \in V_k$ and $w_{\bar{1}} \in V_{2\ell}$. We equip this space with a nondegenerate bilinear form $[\cdot,\cdot]$ defined by 
\begin{equation}\label{eq:bilform}
[x,y]:=(x_{\bar 0},y_{\bar 0})+\langle x_{\bar 1},y_{\bar 1}\rangle, 
\end{equation}
for $x,y\in V_{k,2\ell}$.
We note that this form is often called a \emph{super symmetric bilinear} form, cf. \cite{CW12}.

\vspace{5mm}

\emph{Proof of Theorem \ref{thm:finite erc}.} Our goal is to show that for each $t \in \N$, we can write $M_{f,t}$ as a Gram matrix of vectors in $V_{k,2\ell}^{\otimes t}$ with respect to the bilinear form $[ \cdot , \cdot ]$. This then immediately implies Theorem~\ref{thm:finite erc}.

Let $t \in \N$ and let $F = (V,E)$ be a $t$-fragment. Recall that a $t$-fragment is a graph with $t$ vertices of degree $1$ labeled $1,\dots,t$. The set of unlabeled vertices of $F$ is denoted by $V'(F)$.
A subset $H \subseteq E$ is called \emph{Eulerian}\index{fragment!Eulerian} if the degree of each unlabeled vertex in $F(H)$ is even. Let $H \subseteq E$ be Eulerian. Let $S(H)$ be the set of labeled vertices incident with an edge in $H$. If $H$ is chosen, we refer to $S(H)$ as $S$. Note that $|S|$ is even because $H$ is Eulerian. We identify the labeled vertices with $[t]$ according to the labeling. Through this identification we view $S$ as a subset of $[t]$. 

We now extend some of the definitions we gave for graphs to fragments.
An \emph{Eulerian orientation}\index{fragment!Eulerian} $\omega$ of $H$ is an orientation of the edges of $H$ such that in $F(H)$, at each unlabeled vertex the number of incoming arcs is equal to the number of outgoing arcs. A \emph{local pairing}\index{fragment!local pairing} $\kappa$ of $F(H)$ is an assignment $\kappa$ to each $v \in V'(F)$ of a decomposition $\kappa_v$ of the edges in $H$ incident with $v$ into ordered pairs. The local pairing $\kappa$ is called \emph{compatible}\index{fragment!compatible} with a Eulerian orientation $\omega$ if for each $v \in V'(F)$ and for each $(a_1,a_2) \in \kappa_v$ the arc $a_1$ is incoming under $\omega$ and the arc $a_2$ is outgoing under $\omega$.

Now let $\kappa$ be a local pairing of $F(H)$ compatible with an Eulerian orientation $\omega$ of $H$. Note that $\kappa$ partitions the edge set of $H$ into circuits and directed trails that begin and end in labeled vertices. We call this decomposition the $\kappa$-\emph{decomposition} of $H$. Let $\hat{c}(\kappa)$ be the number of circuits in the $\kappa$-decomposition. Define $M(\omega,\kappa)$ to be the directed perfect matching on $S \subseteq [t]$ such that $(i,j)$ is an arc of $M(\omega,\kappa)$ if there is a directed trail in the $\kappa$-decomposition from $i$ to $j$. Write $S = \{i_1,\dots,i_{|S|}\}$ with $i_1 < \dots < i_{|S|}$. 
Let $\text{sgn}(M(\omega,\kappa))$ be the sign of a permutation that sends $M(\omega,\kappa)$ to the directed perfect matching with arcs $(i_1,i_2),\dots,(i_{|S|-1}, i_{|S|})$. This is well-defined by Lemma~\ref{matching sign}.  

Let $\chi = (\chi_0,\chi_1)$ with $\chi_0 : [t] \setminus S \rightarrow [k]$ and $\chi_1 : S \rightarrow [2\ell]$. Such a pair $\chi = (\chi_0,\chi_1)$ is called \emph{consistent} with $S$. We say that a coloring $\psi: E \setminus H \rightarrow [k]$ \emph{extends} $\chi_0$ if, for each $i \in [t] \setminus S$, we have $\chi_0(i) = \psi(a)$, where $a \in E \setminus H$ is the edge incident with $i$. We denote this by $\psi \sim \chi_0$. Similarly, we say that $\phi: H \rightarrow [2\ell]$ extends $\chi_1$ if, for each $i \in S$, we have $\chi_1(i) = \phi(a)$, where $a \in H$ is the edge incident with $i$. Again, we denote this by $\phi \sim \chi_1$. 

For $i \in [t] \setminus S$, let $c_{\chi,\omega,i} = e_{\chi_0(i)}$, and for $i \in S$, let $c_{\chi,\omega,i} = f_{\chi_1(i)}$ if the edge incident with $i$ is incoming at $i$ under $\omega$ and let $c_{\chi,\omega,i} = g_{\chi_1(i)}$ if the edge incident with $i$ is outgoing at $i$ under $\omega$. 
We define the tensor $t_{h,\chi}'(F,H,\omega,\kappa)$ in $V_{k,2\ell}^{\otimes t}$ by
\begin{align*}
& t_{h,\chi}'(F,H,\omega,\kappa) := \\
&(-1)^{\hat{c}(\kappa)}\sum_{\substack{\psi \sim \chi_0 \\  \phi \sim \chi_1}}\prod_{v \in V'(F)} h(\bigodot_{a \in \delta_{E\setminus H}(v)} e_{\psi(a)} \otimes \bigwedge_{(a_1,a_2) \in \kappa_v} f_{\phi(a_1)} \wedge g_{\phi(a_2)}) \bigotimes_{i \in [t]} c_{\chi,\omega,i}, 
\end{align*}
where the sum runs over all $\psi: E \setminus H \rightarrow [k]$ with $\psi \sim \chi_0$ and all $\phi : H \rightarrow [2\ell]$ with $\phi \sim \chi_1$. 
We define
\[
t_h'(F,H,\omega,\kappa) := \sum_{\substack{\chi \text{ consistent} \\ \text{ with } S}} t_{h,\chi}'(F,H,\omega,\kappa),
\]
and finally we define
\[
 t_{h}(F,H,\omega,\kappa) := (-1)^{|S|/4}\text{sgn}(M(\omega,\kappa))t_h'(F,H,\omega,\kappa).
\]
We first make an important observation. Let $\omega'$ be obtained from $\omega$ by inverting the arcs in a directed trail $P$ in the $\kappa$-decomposition and let $\kappa'$ be obtained from $\kappa$ by inverting all the pairings in the directed trail $P$ (hence $\kappa'$ is compatible with $\omega'$). Note that $\text{sgn}(M(\omega,\kappa)) = -\text{sgn}(M(\omega',\kappa'))$, as $M(\omega',\kappa')$ is obtained from $M(\omega,\kappa)$ by inverting the direction of one arc. The total number of pairings and arcs in the directed trail $P$ is odd. So similar to what we have seen in the proof of~\cite[Lemma 2]{RS17}, we find that $t_h'(F,H,\omega,\kappa) = - t_h'(F,H,\omega',\kappa')$. This shows that 
\begin{equation}\label{eq:pathinvariant}
 t_h(F,H,\omega,\kappa) =  t_h(F,H,\omega',\kappa'). 
\end{equation}
Now let $F_1 = (V_1,E_1)$ and $F_2=(V_2,E_2)$ be two $t$-fragments with Eulerian subsets $H_1 \subseteq E_1$ and $H_2 \subseteq E_2$ such that $S(H_1) = S(H_2) = S$. Let $G = (V,E) = F_1*F_2$. Note that $H_1$ and $H_2$ induce an Eulerian subset of $E$. We denote this set by $H_1*H_2$. For $i = 1,2$, let $\omega_i$ be an Eulerian orientation of $H_i$ with a compatible local pairing $\kappa_i$ of $F_i(H_i)$. We next show that 
\begin{equation}\label{eq:tensortos}
[t_h(F_1,H_1,\omega_1,\kappa_1), t_h(F_2,H_2,\omega_2,\kappa_2)] = s_h(G,H_1*H_2).
\end{equation}
By (\ref{eq:pathinvariant}) we may assume that $\omega_1,\kappa_1, \omega_2$ and $\kappa_2$ are chosen in such a way that $(S,M(\omega_1,\kappa_1) \cup M(\omega_2,\kappa_2))$ is an Eulerian digraph. By Lemma~\ref{matching sign} we see that 
\[
\text{sgn}(M(\omega_1,\kappa_1))\text{sgn}(M(\omega_2,\kappa_2))=(-1)^{c(M(\omega_1,\kappa_1) \cup M(\omega_2,\kappa_2))},
\]
as $o(M(\omega_1,\kappa_1) \cup M(\omega_2,\kappa_2)) = 0$. Furthermore, $\omega_1$ and $\omega_2$ induce an Eulerian orientation $\omega$ of $H_1 * H_2$ and the local pairing $\kappa$ of $G(H_1 *H_2)$ induced by $\kappa_1$ and $\kappa_2$ is compatible with $\omega$. So we find that
\begin{equation}\label{eq:signscircuits}
\text{sgn}(M(\omega_1,\kappa_1))\text{sgn}(M(\omega_2,\kappa_2)) (-1)^{\hat{c}(\kappa_1)}(-1)^{\hat{c}(\kappa_2)} = (-1)^{c(\kappa)}. 
\end{equation}

Now let $\chi = (\chi_0,\chi_1)$ and $\chi' = (\chi_0',\chi_1')$ both be consistent with $S$. We consider
\begin{equation}\label{eq:tensorfirst}
[t_{h,\chi}'(F_1,H_1,\omega_1,\kappa_1),t_{h,\chi'}'(F_2,H_2,\omega_2,\kappa_2)].
\end{equation}
Note that this is equal to $0$ if $\chi_0$ and $\chi_0'$ do not agree. Furthermore, as the orientations of $\omega_1$ and $\omega_2$ are opposite at a labeled vertex in $S$, we see that $\chi_1$ and $\chi_1'$ also have to agree for (\ref{eq:tensorfirst}) to be non-zero. So let us assume that $\chi = \chi'$. Note that as the orientation $\omega$ is Eulerian, at half of the vertices in $S$ the arc of $H_1$ is incoming and the arc of $H_2$ is outgoing. So at such a vertex $i$ the bilinear form becomes $\langle f_{\chi_1(i)}, g_{\chi_1(i)} \rangle = -1$. At the other half of the vertices in $S$ the arc of $H_2$ is incoming and the arc of $H_1$ is outgoing. So at such a vertex $i$ the bilinear form becomes $\langle g_{\chi_1(i)}, f_{\chi_1(i)} \rangle = 1$. These contributions cancel with $(-1)^{|S(H_1)|/4}(-1)^{|S(H_2)|/4}$. Together with (\ref{eq:signscircuits}) this shows (\ref{eq:tensortos}).

Now, for $i=1,2$, let $H_i \subseteq E_i$ and let $\omega_i$ be an Eulerian orientation of $H_i$ with a compatible local pairing $\kappa_i$ of $F_i(H_i)$. Suppose that $S(H_1) \neq S(H_2)$. Then it follows that 
\begin{equation}\label{eq:tensorthird}
[t_{h}(F_1,H_1,\omega_1,\kappa_1), t_{h}(F_2,H_2,\omega_2,\kappa_2)] = 0,
\end{equation}
because at $i$ in the symmetric difference of $S(H_1)$ and $S(H_2)$ there occurs an element of $V_k$ at one side of the bilinear form and an element of $V_{2\ell}$ at the other side. 
 
Note that as $H_1$ and $H_2$ run over all Eulerian subsets of $F_1$ and $F_2$, we have that $H_1 *H_2$ runs over all Eulerian subsets of $G$. So it follows from (\ref{eq:tensortos}) and (\ref{eq:tensorthird}) that
\begin{align}
\biggl[&\sum_{\substack{H_1 \subseteq E_1 \\ H_1 \text{ Eulerian}}}t_h(F_1,H_1,\omega_1,\kappa_1),\sum_{\substack{H_2 \subseteq E_2 \\ H_2 \text{ Eulerian}}} t_h(F_2,H_2,\omega_2,\kappa_2)\biggr] = \\&  \sum_{\substack{H \subseteq E \\ H \text{ Eulerian}}} s_h(G, H,\omega,\kappa) = f(G),
\end{align}
where, for $i=1,2$, $\kappa_i$ is a local pairing of $F_i(H_i)$ compatible with an Eulerian orientation $\omega_i$ of $H_i$. This shows that $M_{f,t}$ indeed is the Gram matrix of a set of vectors in $V_{k,2\ell}^{\otimes t}$ with respect to the bilinear form $[ \cdot , \cdot ]$. So the rank of $M_{f,t}$ is bounded by $(k+2\ell)^t$. This proves Theorem~\ref{thm:finite erc}. \qed

\section{Connections to invariant theory}\label{sec:invariant}
In this section we will indicate how one might prove the conjectured converse to Theorem~\ref{thm:finite erc} by saying how mixed partition functions connect to the invariant theory of the orthogonal and symplectic groups and the orthosymplectic Lie superalgebra.

Following~\cite{DGLRS12,S15,RS17}, to prove a converse to Theorem~\ref{thm:finite erc} one essentially needs to prove two statements. First one needs an algebraic characterization of mixed partition functions Second one needs to show that a multiplicative graph parameter with finite edge-rank connectivity satisfies these algebraic conditions. 
Below we comment on how such a possible algebraic characterization is deeply connected to invariant theory.

For $k,\ell \in \N$, recall that $V_k$ is a vector space of dimension $k$ over $\C$ with basis $\{e_1,\ldots,e_k\}$ and recall that $V_{2\ell}$ is a vector space over $\C$ of dimension $2 \ell$ with basis $\{f_1,\ldots,f_{2\ell}\}$. 
We furthermore defined $V_{k,2 \ell}$ as $V_k \oplus V_{2\ell}$. 
The \emph{orthogonal group} $\orth_{k}$ is the group of $k\times k$ matrices that preserve the symmetric bilinear form; i.e., for $g\in \C^{k\times k}$, $g\in \orth_{k}$ if and only if $(gx,gy)=(x,y)$ for all $x,y\in V_{k}$.
The \emph{symplectic group} $\spl_{2\ell}$ is the group of $2\ell\times 2\ell$ matrices that preserve the skew-symmetric bilinear form; i.e., for $g\in \C^{2\ell\times 2\ell}$, $g\in \spl_{2 \ell}$ if and only if $\langle gx,gy\rangle =\langle x,y\rangle $ for all $x,y\in V_{2\ell}$.
The group of $(k+2\ell)\times (k+2\ell)$-matrices that preserve the form \eqref{eq:bilform} can be shown to be the direct product of $\orth_{k}$ and $ \spl_{2 \ell}$.

Consider a basis element $f=f_{i_1}\wedge \cdots\wedge f_{i_{t}}\in \bigwedge V_{2\ell}$. We call $f$ \text{even} if $t$ is even and \text{odd} otherwise.
Let us for $x\in \Z/2\Z=\{\overline 0,\overline 1\}$ denote by $(\bigwedge V_{2\ell})_x$ the subspace of $\bigwedge V_{2\ell}$ spanned by the basis elements $f_{i_1}\wedge \cdots\wedge f_{i_{t}}$ for which $x=t\mod 2$.
We define
\[
R=R(V_{k,2 \ell}) := \text{Sym}( S V_k \otimes (\bigwedge V_{2 \ell})_{\overline 0} ). 
\]

Through the canonical isomorphisms $V_{k}\cong (V_{k}^{*})^*$ and $V_{2 \ell} \cong \mathcal (V_{2 \ell}^*)^*$, we can view $R$ as the space of regular functions on $ (S V_k  \otimes  (\bigwedge V_{2 \ell})_{\overline 0})^*$.
By $\C \G$ we denote the space of formal linear combinations of elements of $\G$ with complex coefficients.
Analogous to \cite{DGLRS12} and \cite{RS17}, we can define a map $p:\C\G\to R$ such that for each $(k,2\ell)$-color edge coloring model $h \in \mathcal (S V_k \otimes \bigwedge V_{2 \ell})^*$ we have $p(G)(h)=p_h(G)$ for each graph $G$.  (We refer to \cite{RS18} for the explicit construction.)

To characterize mixed partition functions, following~\cite{DGLRS12,RS17}, the idea is to consider the ideal generated by $p(G)-f(G)$ for all graphs $G$ in $R$, where $f$ is a graph parameter.
Provided certain conditions are satisfied, Hilbert's Nullstellensatz can be used to find a common `zero' for this ideal. Such a common zero $h$ is exactly an edge-coloring model $h$ such that $p_h(G)=p(G)(h)=f(G)$ for all graphs $G$. 

Now in case $\ell=0$, (resp.\ $k=0$), the algebraic characterization of these respective partition function reads that the graph parameter $f$ should map the kernel of the map $p$ to zero (i.e. we first extend $f$ linearly to a map $f:\mathbb{C} \mathcal{G}\to \mathbb{C}$; this map should satisfy $\ker p\subseteq \ker f$).
The image of this map $p$ turns out to be the space of polynomials in $R$ that are invariant under a natural action of the orthogonal group~\cite{DGLRS12} (resp.\ the symplectic group~\cite{RS17}). The kernel of $p$ can be described using the second fundamental theorem of invariant theory for the orthogonal (resp.\ symplectic) group. 
Using reductivity of the orthogonal and symplectic group, it can be shown that this yields a valid characterization of these respective partition functions.

When both $k$ and $2\ell$ are positive, we conjecture that the multiplicative graph parameters $f$ that satisfy $\ker p\subseteq \ker f$ are exactly the mixed partition functions.

The kernel of $p$ can still be described~\cite{RS18} and is related to the second fundamental theorem of invariant theory of the orthosymplectic supergroup cf.~\cite{LZ14}.
So to prove this conjecture along the same lines of \cite{DGLRS12,RS17} we would need a good understanding of the image of the map $p$. 

There is a natural action of $\orth_{k}\times  \spl_{2 \ell}$ on $R$ and the image of the map $p$ consists of $\orth_{k}\times  \spl_{2 \ell}$-invariants. However it is {\bf not} true that the image of $p$ is equal to the space of elements of $R$ that are invariant under the group $\orth_{k}\times  \spl_{2 \ell}$.
To say more about the image of $p$, we need some definitions from supersymmetry. 
We refer to~\cite{CCF11,CW12} for background on supersymmetry and for notation that we use here.

The space $S V_k\otimes \bigwedge V_{2\ell}$ has the structure of a super vector space. 
Its even part is $S V_k\otimes  (\bigwedge V_{2 \ell})_{\overline 0}$ and its odd part is $S V_k\otimes  (\bigwedge V_{2 \ell})_{\overline 1}$. 
The tensor algebra $T=T(S V_k\otimes \bigwedge V_{2\ell})$ then naturally carries the structure of a super algebra. The \emph{super symmetric algebra}, $S$, is the quotient of $T$ by the ideal generated by $x\otimes y-(-1)^{|x||y|}y\otimes x$ with $x,y \in S V_k\otimes \bigwedge V_{2\ell}$ homogeneous elements. (Here $|x|=0$ if $x$ is even and $|x|=1$ if $x$ is odd for any homogeneous element $x$ of a super vector space.)
Then $R$ is a subalgebra of $S$ and we have a natural projection $\Pi:S\to R$.

The orthosymplectic Lie superalgebra $\mathfrak{osp}(V_{k,2\ell})$ is the Lie superalgebra preserving the form (\ref{eq:bilform}), i.e., for each $X \in \mathfrak{osp}(V_{k,2\ell})$, we have that $[Xv,w] =(-1)^{|X||v|} [v,Xw]$, where we assume all elements involved to be homogenous and we view $X$ as an element of $\text{End}(V_{k,2\ell})$. 
(Both $V_{k,2\ell}$ and $\text{End}(V_{k,2\ell})$ naturally carry the structure of a super vector space.)
It turns out that the Lie super algebra $\mathfrak{osp}(V_{k,2\ell})$ and the group $\orth_{k}\times \spl_{2\ell}$ have a natural action on $S$ and that 
\[
\text{im}(p) = \Pi(S^{\orth_{k}\times \spl_{2\ell},\mathfrak{osp}(V_{k,2\ell})}),
\]
the projection onto $R$ of the space of elements of $S$ that are invariant under the action of $\orth_{k}\times \spl_{2\ell}$ and the action of $\mathfrak{osp}(V_{k,2\ell})$. (This is proved in~\cite{RS18} using results of Berele and Regev~\cite{BR87} and recent results of Leher and Zhang~\cite{LZ17}.)

Since the Lie super algebra $\mathfrak{osp}(V_{k,2\ell})$ is not reductive, it is not clear how to continue the proof outline from~\cite{DGLRS12,RS17}. We expect that with a thorough understanding of invariant theory in this super symmetric setting one may finish the proof outline sketched here. 

\section{Concluding remarks}
In this paper we have introduced mixed partition functions and given several examples of mixed partition functions, shown that they have finite edge-rank connectivity and briefly discussed connections with invariant theory.
In~\cite{RS18} we will focus on the algebraic and invariant-theoretic aspects of mixed partition functions. By exploiting recent developments in the invariant theory of the orthosymplectic supergroup~\cite{LZ14,LZ17,LZ17a}, we aim to prove that any multiplicative graph parameter with finite edge-connection rank is a mixed partition function.

It would be interesting to find more examples of mixed partition functions. 
Given the fact that supersymmetry originated in physics, it would be interesting to explore if there are natural statistical physics models that give rise to interesting combinatorial parameters, similar to how the Potts model partition function is related to the Tutte polynomial for example.

\section*{Acknowledgements}
The research leading to these results has received funding from the European Research Council
under the European Union's Seventh Framework Programme (FP7/2007-2013) / ERC grant agreement n$\mbox{}^{\circ}$ 339109. 

We thank Lex Schrijver for useful comments on an earlier version of this paper. We moreover thank the anonymous referees for their constructive feedback.

\end{document}